\theoremstyle{plain}
\newtheorem{Pocz}{Poczatek}[section]
\newtheorem{Proposition}[Pocz]{Proposition}
\newtheorem{Theorem}[Pocz]{Theorem}
\newtheorem{Corollary}[Pocz]{Corollary}
\newtheorem{Observation}[Pocz]{Observation}
\newtheorem{Example}[Pocz]{Example}
\theoremstyle{definition}
\newtheorem{Definition}[Pocz]{Definition}
\theoremstyle{remark}
\newtheorem{Exercise}[Pocz]{Exercise}
\numberwithin{equation}{section}
\title[Unifying Linear Algebra]
{Unifying Linear Algebra}
\author{Jerzy Dydak}
\address{University of Tennessee, Knoxville, TN 37996, USA}
\email{jdydak@utk.edu}
\date{ \today
}
\keywords{dimension, determinants, linear combinations, linear maps, Cramer's Rule, expansion by cofactors}
\subjclass[2000]{Primary 15A15; Secondary 15A06}
\begin{document}
\maketitle
\begin{center}
\today
\end{center}

\tableofcontents

\begin{abstract}
We unify Linear Algebra by proposing a definition of determinants via one equation that implies all known properties of them:\\
1. Cramer's Rule,\\
2. Cofactor expansion,\\
3. Antisymmetry of determinants,\\
4. Linearity of determinants,\\
5. Uniqueness of determinants up to a constant.\\
6. $\det(A\cdot B)=\det(A)\cdot\det(B)$ for square matrices,\\
7. $\det(A^T)=\det(A)$ for square matrices.

In other words, we propose a top-down approach to determinants: instead of building up slowly via definitions, we propose one equation that implies all of the above properties. It also leads naturally to basic concepts of Linear Algebra: linear combinations, linear independence, basis, dimension.

\end{abstract}

\section{Introduction}
Linear Algebra, a fundamental component of the undergraduate STEM curriculum, is often acknowledged as a challenging course. Its abstract concepts gradually unfold, unveiling the essence of the subject: the art of solving linear equations. In this paper, we present a top-down approach that promptly introduces the determinant, serving as a natural gateway to a comprehensive understanding of all fundamental concepts in Linear Algebra.

To foster an enriching educational experience, we have deliberately chosen not to provide exhaustive proofs and solutions. We firmly believe that an overemphasis on memorization and regurgitation detracts from genuine learning which can only be achieved by students doing their own thinking as much as possible. Consequently, we refrain from permanently posting detailed solutions to homework problems on the internet.

For a thorough comprehension of the concepts expounded in our paper, we recommend consulting the referenced works \cite{GS1} or \cite{GS2}.

The author wishes to express sincere gratitude to Dr.Nikolay Brodskiy for his invaluable assistance in organizing the GTA Mentoring program in Linear Algebra. This program played a pivotal role in significantly deepening the author's understanding of this mathematical field and refining his teaching methodologies. Special thanks are also extended to the author's former PhD students who actively participated in the program: Dr.Kyle Austin, Dr.Michael Holloway, Dr.Ryan Jensen, Dr.Kevin Sinclair, Dr.Logan Higginbotham, Dr.Pawe\l\ Grzegrz\' o\l ka, Dr.Thomas Weighill, and Dr.Jeremy Siegert.

\section{Main equation}\label{MainEquation}

Given a vector space $V$ over a field $\mathbb{F}$ we are interested in non-zero functions $D:V^n\to \mathbb{F}$ such that for any $(n+1)$-tuple of vectors $v_1,\ldots,v_n,b$ in $V$ the following equation holds
$$D(v_1,\ldots,v_n)\cdot b=D(b,v_2,\ldots,)\cdot v_1+D(v_1,b,\ldots)\cdot v_2+\ldots+D(v_1,\ldots,b)\cdot v_n,$$
where the $k$-th element of the sum is obtained by replacing $v_k$ with $b$.
Notice it implies existence of an $n$-tuple $v_1,\ldots,v_n$ in $V$ such that every element of $V$ is their linear combination. 

\begin{Example}\label{Example1}
Consider $D:\mathbb{R}\times \mathbb{R}\to \mathbb{R}$ defined as follows:\\
1. $D(x,y)=0$ if $x=0$ or $y=0$,\\
2. $D(x,y)=x-y$ if $x\ne 0$ and $y\ne 0$.

It is easy to check that $D$ does satisfy
$$D(v_1,v_2)\cdot b=D(b,v_2)\cdot v_1+D(v_1,b)\cdot v_2$$
for any $3$-tuple $v_1,v_2,b$ in $\mathbb{R}$.
\end{Example}

To avoid the situation from Example \ref{Example1} we propose the following

\begin{Definition}\label{MainDefinition}

Given a vector space $V$ over a field $\mathbb{F}$, an \textbf{$n$-determinant} is a non-zero function $D:V^n\to \mathbb{F}$ such that for any $(n+1)$-tuple of vectors $v_1,\ldots,v_n,b$ in $V$ the following equation holds
$$D(v_1,\ldots,v_n)\cdot b=D(b,v_2,\ldots,)\cdot v_1+D(v_1,b,\ldots)\cdot v_2+\ldots+D(v_1,\ldots,b)\cdot v_n,$$
where the $k$-th element of the sum is obtained by replacing $v_k$ with $b$
and $V$ is not spanned by any set of $(n-1)$ vectors in $V$. In other words, the \textbf{dimension} of $V$ over $\mathbb{F}$ is $n$.
\end{Definition}

\begin{Observation}\label{1dimCase}
Any $1$-determinant $D$ on $\mathbb{F}$ is of the form
$$D(x)=m\cdot x$$
for some $m\in \mathbb{F}$, $m\ne 0$.
\end{Observation}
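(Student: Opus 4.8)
The plan is to simply specialize the defining equation of Definition \ref{MainDefinition} to the case $n=1$ and $V=\mathbb{F}$. For $n=1$ the equation involves a $2$-tuple of vectors $v_1,b\in\mathbb{F}$ and reads
$$D(v_1)\cdot b=D(b)\cdot v_1,$$
since the sum on the right has only its single $k=1$ term, in which $v_1$ is replaced by $b$. This identity is to hold for all choices of $v_1$ and $b$ in $\mathbb{F}$.

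The first step is to plug in $b=1$, the multiplicative identity of the field $\mathbb{F}$. This is a legitimate choice of the auxiliary vector, and it yields $D(v_1)\cdot 1=D(1)\cdot v_1$, i.e. $D(v_1)=D(1)\cdot v_1$ for every $v_1\in\mathbb{F}$. Setting $m:=D(1)$ gives exactly the claimed formula $D(x)=m\cdot x$.

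The second step is to check $m\neq 0$. Here one uses that a $1$-determinant is by definition a \emph{non-zero} function, so there is some $x_0\in\mathbb{F}$ with $D(x_0)\neq 0$; from $D(x_0)=m\cdot x_0$ we get $m\neq 0$ (and also $x_0\neq 0$). One may also remark that the dimension hypothesis in Definition \ref{MainDefinition}, namely that $V$ is not spanned by the empty set, is automatic for $V=\mathbb{F}$ since $\mathbb{F}\neq\{0\}$, so it imposes no extra constraint here.

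I do not expect a genuine obstacle in this argument; the only points worth stating carefully are that substituting $b=1$ is permissible in the universally quantified main equation, and that non-triviality of $D$ is precisely what upgrades the conclusion from $m\in\mathbb{F}$ to $m\neq 0$.
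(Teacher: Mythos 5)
Your proof is correct, and since the paper leaves this Observation to the reader, your argument (specialize the main equation to $n=1$ to get $D(v_1)\cdot b=D(b)\cdot v_1$, set $b=1$ to obtain $D(x)=D(1)\cdot x$, and use non-triviality of $D$ to conclude $D(1)\neq 0$) is exactly the intended one. The remark that the dimension condition is vacuous here because $\mathbb{F}\neq\{0\}$ is a nice touch but not strictly needed.
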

\begin{proof}
Left to the reader.
\end{proof}

\begin{Observation}\label{Nullity}
If $n > 1$ and $D$ is an $n$-determinant on $V$, then
$D(v_1,\ldots,v_n)=0$ if any two vectors $v_k$ and $v_m$, for some $k\ne m$, are parallel.
\end{Observation}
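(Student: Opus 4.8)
The plan is to argue by contradiction: assume $D(v_1,\ldots ,v_n)\neq 0$ while, for some $k\neq m$, the vectors $v_k$ and $v_m$ are parallel, and derive a violation of the dimension clause in Definition~\ref{MainDefinition}. The engine of the argument is the remark made right after the statement of the main equation, namely that the defining identity turns every vector into a linear combination of $v_1,\ldots ,v_n$ as soon as the scalar $D(v_1,\ldots ,v_n)$ is invertible. Concretely, for an arbitrary $b\in V$ the main equation gives
$$b=\sum_{k=1}^{n}D(v_1,\ldots ,v_n)^{-1}\,D(v_1,\ldots ,b,\ldots ,v_n)\,v_k,$$
where in the $k$-th summand $b$ occupies the $k$-th slot, so that $V=\operatorname{span}\{v_1,\ldots ,v_n\}$.

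Next I would use parallelism to discard one of these spanning vectors. Writing the parallelism of $v_k$ and $v_m$ as a nontrivial relation $\alpha v_k+\beta v_m=0$ with $(\alpha,\beta)\neq(0,0)$, and interchanging the roles of $k$ and $m$ if necessary so that $\beta\neq 0$, we obtain $v_m=-\beta^{-1}\alpha\,v_k$. Hence $v_m$ already lies in the span of the remaining $n-1$ vectors, and $\operatorname{span}\{v_1,\ldots ,v_n\}=\operatorname{span}\{v_i : i\neq m\}$. Combining this with the previous paragraph, $V$ is spanned by the $n-1$ vectors $\{v_i : i\neq m\}$, which contradicts the requirement in Definition~\ref{MainDefinition} that $V$ not be spanned by any $(n-1)$-element set. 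Therefore $D(v_1,\ldots ,v_n)=0$.

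I do not expect a genuine obstacle here; the statement falls out directly from the main equation together with the dimension hypothesis. The only points needing care are bookkeeping: phrasing ``parallel'' as the existence of a nontrivial linear relation, so that the degenerate case in which one of $v_k,v_m$ is the zero vector is handled uniformly, and reading the explicit linear combination off the main equation correctly after dividing by the nonzero field element $D(v_1,\ldots ,v_n)$. The hypothesis $n>1$ is used only to ensure that two distinct indices $k\neq m$ exist, i.e.\ that the statement is not vacuous.
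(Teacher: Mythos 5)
Your proof is correct and follows the same route as the paper's one-line argument: if $D(v_1,\ldots,v_n)\neq 0$ the main equation exhibits every $b$ as a linear combination of $v_1,\ldots,v_n$, and parallelism lets you drop one vector, so $V$ would be spanned by $n-1$ vectors, contradicting the dimension clause of Definition~\ref{MainDefinition}. You have simply written out the details that the paper leaves implicit.
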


Indeed, otherwise the dimension of $V$ over $\mathbb{F}$ is less than $n$. More generally,

\begin{Proposition}\label{LinearDependenceProp}
If $n > 1$ and $D$ is an $n$-determinant on $V$, then
$D(v_1,\ldots,v_n)=0$ if vectors $ v_1,\ldots,v_n$ are linearly dependent.
\end{Proposition}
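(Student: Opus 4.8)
The plan is to argue by contradiction, exploiting the feature built into Definition \ref{MainDefinition} that $\dim_{\mathbb F}V=n$, i.e.\ that $V$ cannot be spanned by any $n-1$ vectors. So I would assume $D(v_1,\ldots,v_n)\neq 0$ and derive such a spanning set of size $n-1$.

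First I would record the observation that the defining equation already forces $\{v_1,\ldots,v_n\}$ to span $V$ as soon as $D(v_1,\ldots,v_n)\neq 0$: for an arbitrary $b\in V$, divide the main identity by the nonzero scalar $D(v_1,\ldots,v_n)$ to obtain
$$b=\sum_{k=1}^{n}\frac{D(v_1,\ldots,b,\ldots,v_n)}{D(v_1,\ldots,v_n)}\,v_k,$$
where in the $k$-th term $b$ occupies the $k$-th slot. Hence every $b\in V$ is a linear combination of $v_1,\ldots,v_n$.

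Next I would use the linear dependence hypothesis. Choose scalars $c_1,\ldots,c_n$, not all zero, with $\sum_{i}c_iv_i=0$, and fix an index $j$ with $c_j\neq 0$. Then $v_j=-c_j^{-1}\sum_{i\neq j}c_iv_i$ lies in the span of the remaining $n-1$ vectors, so $\operatorname{span}\{v_1,\ldots,v_n\}=\operatorname{span}\{v_i:i\neq j\}$. Combining this with the previous step: if $D(v_1,\ldots,v_n)\neq 0$, then $\{v_i:i\neq j\}$, a set of only $n-1$ vectors, spans $V$, contradicting the requirement in Definition \ref{MainDefinition} that the dimension of $V$ over $\mathbb F$ equals $n$. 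Therefore $D(v_1,\ldots,v_n)=0$.

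I do not expect a real obstacle here; the argument is short and is simply the natural generalization of Observation \ref{Nullity}, which reappears as the special case of two parallel vectors. The only step needing any care is the elementary fact used above — that a nontrivial dependence relation lets one solve for at least one vector in terms of the others — which is exactly where the choice of an index $j$ with $c_j\neq 0$ (and the standing assumption $n>1$) enters.
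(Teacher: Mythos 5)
Your argument is correct and is essentially the paper's own proof, just written out in full: both deduce from $D(v_1,\ldots,v_n)\neq 0$ via the main equation that the $v_i$ span $V$, then use the dependence relation to drop one vector and contradict the requirement in Definition \ref{MainDefinition} that no $n-1$ vectors span $V$. The paper merely compresses these steps into one sentence.
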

\begin{proof}
If vectors $ v_1,\ldots,v_n$ are linearly dependent and $D(v_1,\ldots,v_n)\ne 0$, then a proper subset of $ v_1,\ldots,v_n$ spans $V$, a contradiction.
\end{proof}

A \textbf{multilinear function} is a generalization of a linear function that operates on multiple vector spaces simultaneously. Unlike a bilinear function that is linear in each variable separately, a multilinear function is linear in each variable independently while allowing all variables to vary.

Formally, let $V_1$, $V_2$, \ldots , $V_n$ be vector spaces over the same field $\mathbb{F}$. A multilinear function, denoted as $f: V_1\times V_2\times\ldots V_n\to V$, where $V$ is a vector space over $\mathbb{F}$, satisfies the following properties:

\textbf{Additivity}: The function $f$ is \textbf{additive} in each variable independently which means that for any two vectors $u_k, v_k\in V_i $ for some $k\leq n$, the function $f$ satisfies:
$$f(u_1, u_2, \ldots, u_k+v_k, \ldots, u_n) = f(u_1, u_2,\ldots, u_k, \ldots, u_n) +  f(u_1, u_2,\ldots, v_k, \ldots, u_n).$$

\textbf{Homogeneity}: The function $f$ is \textbf{homogeneous} of degree $1$ in each variable independently which means that for any vectors $u_k\in V_k$, where $k\leq n$ is given,  and for any scalar $c\in \mathbb{F}$, the function $f$ satisfies:
$$f(u_1, u_2, \ldots, c\cdot u_k, \ldots, u_n) = c\cdot f(u_1, u_2,\ldots, u_k, \ldots, u_n).$$

In other words, a multilinear function is linear in each variable independently, allowing all variables to vary while preserving additivity and homogeneity properties.

\begin{Proposition}\label{MultiLinearity}
If $n \ge 1$ and $D$ is an $n$-determinant on $V$, then $D$ is 
a multilinear function.
\end{Proposition}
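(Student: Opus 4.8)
The plan is to deduce both additivity and homogeneity of $D$ in each variable directly from the main equation, by exploiting the explicit coordinates it produces on any tuple where $D$ is nonzero, and then transferring linearity to an arbitrary tuple by a short exhaustion of cases.

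First I would establish an auxiliary fact: if $D(f_1,\ldots,f_n)\ne 0$, then $(f_1,\ldots,f_n)$ is a basis of $V$. Dividing the main equation (applied to $f_1,\ldots,f_n$ with an arbitrary $b$) by the nonzero scalar $D(f_1,\ldots,f_n)$ exhibits $b$ as an explicit linear combination of $f_1,\ldots,f_n$, so these vectors span $V$; and if they were linearly dependent, then one of them would be a combination of the others, leaving a spanning set of only $n-1$ vectors, contradicting $\dim_{\mathbb F}V=n$ from Definition \ref{MainDefinition}. This step uses only the main equation and the definition, so there is no circularity with multilinearity. (For $n=1$ one also notes that the main equation forces $D(0)=0$, so $f_1\ne 0$ automatically.)

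Next, for such a basis $(f_1,\ldots,f_n)$ and each index $j$, the main equation reads
$$b=\sum_{i=1}^n \frac{D(f_1,\ldots,\underbrace{b}_{i\text{-th}},\ldots,f_n)}{D(f_1,\ldots,f_n)}\,f_i ,$$
so $\dfrac{D(f_1,\ldots,\underbrace{b}_{j\text{-th}},\ldots,f_n)}{D(f_1,\ldots,f_n)}$ is precisely the $j$-th coordinate of $b$ relative to the basis $(f_1,\ldots,f_n)$. Since coordinates relative to a basis are unique, the coordinate functions are additive and homogeneous, hence linear; multiplying by the fixed nonzero scalar $D(f_1,\ldots,f_n)$, the map $b\mapsto D(f_1,\ldots,\underbrace{b}_{j\text{-th}},\ldots,f_n)$ is linear. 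This already yields linearity of $D$ in slot $j$ whenever the remaining $n-1$ entries are the other vectors of some basis.

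Finally, I would fix an arbitrary tuple $(v_1,\ldots,v_n)$ and slot $k$, and analyze $g(u):=D(v_1,\ldots,\underbrace{u}_{k\text{-th}},\ldots,v_n)$. If $\{v_i:i\ne k\}$ is linearly dependent, then every tuple $(v_1,\ldots,u,\ldots,v_n)$ is linearly dependent, so $g\equiv 0$ by Proposition \ref{LinearDependenceProp}, which is linear. If $\{v_i:i\ne k\}$ is linearly independent and $g$ is identically zero, we are again done. In the remaining case $\{v_i:i\ne k\}$ is independent and $g(u_0)\ne 0$ for some $u_0$; then by the auxiliary fact $(v_1,\ldots,u_0,\ldots,v_n)$ is a basis, and the previous step (applied in slot $k$) shows $g$ is linear. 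As the tuple and slot were arbitrary, $D$ is multilinear, with $n=1$ subsumed as the case where $\{v_i:i\ne k\}$ is the empty, independent set. I expect the only genuine obstacle to be organizing this last exhaustion correctly — in particular remembering to peel off the sub-cases in which $D$ vanishes identically in the $k$-th slot, since the coordinate argument speaks only about tuples on which $D\ne 0$ — rather than any real computation.
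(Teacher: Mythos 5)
Your argument is correct and non-circular, but it is organized quite differently from the paper's. The paper proves homogeneity and additivity by contradiction: assuming, say, $D(T'')\ne D(T)+D(T')$, it adds and subtracts instances of the main equation so that the terms involving $v_k$, $w_k$, and $v_k+w_k$ cancel, leaving an arbitrary $b$ expressed as a combination of the $n-1$ vectors of $T\setminus\{v_k\}$ (or, in the homogeneity case, a nontrivial dependence among $T$), contradicting $\dim V=n$; this handles all tuples at once with no case analysis. You instead identify $u\mapsto D(f_1,\ldots,u,\ldots,f_n)/D(f_1,\ldots,f_n)$ with the $j$-th coordinate functional of the basis $(f_1,\ldots,f_n)$ whenever $D(f_1,\ldots,f_n)\ne 0$, so linearity comes for free from uniqueness of coordinates, and you dispose of the remaining tuples by showing the relevant partial function vanishes identically. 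Your route is more conceptual --- it explains \emph{why} $D$ is linear in each slot (its partial functions are scalar multiples of coordinate functionals) and it essentially yields Cramer's Rule (Corollary \ref{CramersRule}) as a by-product --- at the cost of a three-way case split that the paper's cancellation trick avoids. One small point of care: the conclusion of your second step should read ``linear in slot $j$ whenever the remaining entries extend to a tuple on which $D$ is nonzero,'' not merely ``to a basis,'' since nonvanishing of $D$ on every basis (Proposition \ref{LinearInDependenceProp}) is not yet available at this stage; your final case analysis does respect this by invoking the coordinate argument only when some $g(u_0)\ne 0$, so this is a matter of phrasing rather than a gap.
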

\begin{proof}
It is obvious for $n=1$ by \ref{1dimCase}, so assume $n > 1$.
Given an $n$-tuple $T=\{v_1,\ldots,v_n\}$ of vectors in $V$ and given $s\in \mathbb{F}$, $k\leq n$, assume $D(T')\ne
s\cdot D(T)$, where $T'$ is obtained from $T$ by replacing $v_k$ by $s\cdot v_k$. Multiply the main equation \ref{MainDefinition} for $b$ by $s$ and subtract from the main equation \ref{MainDefinition} for $s\cdot b$. It gives a linear combination for vectors in $T$ being $0$ with at least one coefficient being non-zero. Hence, vectors in $T$
are linearly dependent. So are vectors in $T'$ resulting in $D(T')=0$ by Proposition \ref{LinearDependenceProp}, a contradiction.

Given an $n$-tuple $T=\{v_1,\ldots,v_n\}$ of vectors in $V$ and given $w_k\in V$, for some $k\leq n$, assume $D(T'')\ne
D(T)+D(T')$, where $T'$ is obtained from $T$ by replacing $v_k$ by $w_k$
and $T''$ is obtained from $T$ by replacing $v_k$ by $v_k+w_k$. Given $b\in V$, add the main equations for $T$ and $T'$ and subtract from the main equation for $T''$. It gives $b$ as a linear combination for vectors in $T\setminus \{v_k\}$, a contradiction.
\end{proof}

An \textbf{antisymmetric function}, also known as an \textbf{alternating function}, is a special type of multivariable function that exhibits a specific symmetry property. In particular, an antisymmetric function changes sign when any pair of its arguments is interchanged.

Formally, let $V_1$, $V_2$, \ldots , $V_n$ be vector spaces over the same field $\mathbb{F}$. A function $f: V_1\times V_2\times\ldots V_n\to V$, where $V$ is a vector space over $\mathbb{F} $is \textbf{antisymmetric} if it satisfies the following property:
$$f(u_1, u_2, \ldots, u_j, \ldots u_i, \ldots, u_n) = - f(u_1, u_2,\ldots, u_i, \ldots, u_j,\ldots, u_n)$$
 for any $i, j\leq n$, where $1 \leq i < j \leq n$. In other words, if we interchange any pair of arguments, the value of the antisymmetric function changes sign.

\begin{Proposition}\label{AntiSymmetry}
If $n > 1$ and $D$ is an $n$-determinant on $V$, then $D$ is 
an antisymmetric function.
\end{Proposition}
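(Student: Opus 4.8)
The plan is to obtain antisymmetry from two facts already in hand: $D$ is multilinear by Proposition \ref{MultiLinearity}, and $D$ vanishes on any $n$-tuple in which two entries coincide, since two equal vectors are parallel (Observation \ref{Nullity}), or more generally linearly dependent (Proposition \ref{LinearDependenceProp}). This is the classical ``alternating implies antisymmetric'' deduction, and it is valid over an arbitrary field.

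First I would fix indices $i<j$ and an $n$-tuple $T=(v_1,\ldots,v_n)$, and form the $n$-tuple $T_0$ obtained from $T$ by placing the vector $v_i+v_j$ in both the $i$-th and the $j$-th slot. Since $T_0$ has two equal entries, $D(T_0)=0$. On the other hand, expanding $D(T_0)$ using additivity (Proposition \ref{MultiLinearity}) first in the $i$-th slot and then in the $j$-th slot writes $D(T_0)$ as the sum of the values of $D$ on the four $n$-tuples that carry, in positions $(i,j)$, the pairs $(v_i,v_i)$, $(v_i,v_j)$, $(v_j,v_i)$, and $(v_j,v_j)$, with all remaining entries equal to those of $T$. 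The first and the last of these tuples again have a repeated entry, so their $D$-values are $0$.

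Hence $0 = D(\ldots,v_i,\ldots,v_j,\ldots) + D(\ldots,v_j,\ldots,v_i,\ldots)$, where the two surviving tuples differ exactly by interchanging the entries in positions $i$ and $j$; rearranging yields precisely the antisymmetry identity from the definition of an antisymmetric function. There is essentially no obstacle in this argument: the only points worth a moment's care are that the vanishing-on-repeated-entries property is legitimately available (it is, via Observation \ref{Nullity}), that $n>1$ guarantees two distinct slots to work with, and that the whole deduction is carried out without ever dividing by $2$, so no hypothesis on the characteristic of $\mathbb{F}$ is needed.
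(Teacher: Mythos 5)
Your argument is correct and is essentially the paper's own proof: the paper likewise forms the tuple with $v_k+v_m$ placed in both the $k$-th and $m$-th slots, invokes Observation \ref{Nullity} to get that $D$ vanishes there, and expands by multilinearity (Proposition \ref{MultiLinearity}) to extract the sign change. Your version simply spells out the four-term expansion and the characteristic-independence that the paper leaves implicit.
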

\begin{proof}
Given an $n$-tuple $n$-tuple $T=\{v_1,\ldots,v_n\}$ of vectors in $V$ and given $k < m\leq n$, consider the $n$-tuple $T'$ in which both $v_m$ and $v_m$ are replaced by $v_k+v_m$. $D(T')=0$ by \ref{Nullity},
apply $D$ being multilinear by \ref{MultiLinearity} to conclude $D$ is antisymmetric.
\end{proof}

\begin{Proposition}\label{LinearInDependenceProp}
If $n > 1$ and $D$ is an $n$-determinant on $V$, then
$D(v_1,\ldots,v_n)\ne 0$ if and only if vectors $ v_1,\ldots,v_n$ are linearly independent.
\end{Proposition}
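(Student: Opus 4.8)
The plan is to dispose of the ``only if'' direction at once and then concentrate on the ``if'' direction, which is the substantive one. If $D(v_1,\ldots,v_n)\ne 0$, then $v_1,\ldots,v_n$ are linearly independent: this is exactly the contrapositive of Proposition \ref{LinearDependenceProp}, so nothing new is required there.

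For the converse I would first record an auxiliary fact: whenever $D(u_1,\ldots,u_n)\ne 0$, the vectors $u_1,\ldots,u_n$ span $V$. Indeed, the main equation of Definition \ref{MainDefinition}, applied to the tuple $u_1,\ldots,u_n$ together with an arbitrary $b\in V$, writes $D(u_1,\ldots,u_n)\cdot b$ as a linear combination of $u_1,\ldots,u_n$; dividing by the nonzero scalar $D(u_1,\ldots,u_n)$ exhibits $b$ in their span.

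Now fix linearly independent vectors $v_1,\ldots,v_n$. Since $D$ is not the zero function, pick $w_1,\ldots,w_n$ with $D(w_1,\ldots,w_n)\ne 0$; by the auxiliary fact they span $V$. I would then carry out a Steinitz-type exchange, proving by induction on $k=0,1,\ldots,n$ that, after a suitable reordering of $w_{k+1},\ldots,w_n$, one has $D(v_1,\ldots,v_k,w_{k+1},\ldots,w_n)\ne 0$. The base case $k=0$ is the choice of the $w_i$. For the inductive step, the current tuple $v_1,\ldots,v_k,w_{k+1},\ldots,w_n$ has nonzero $D$, hence spans $V$, so $v_{k+1}=\sum_{i\le k}a_i v_i+\sum_{j\ge k+1}b_j w_j$; not all the $b_j$ can vanish, since otherwise $v_1,\ldots,v_{k+1}$ would be linearly dependent. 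Reordering so that $b_{k+1}\ne 0$ (this only changes the sign of $D$, by Proposition \ref{AntiSymmetry}), I expand $D(v_1,\ldots,v_k,v_{k+1},w_{k+2},\ldots,w_n)$ in the $(k+1)$-st slot using multilinearity (Proposition \ref{MultiLinearity}): every term containing some $v_i$ with $i\le k$ or some $w_j$ with $j\ge k+2$ has a repeated argument and so vanishes by Observation \ref{Nullity}, leaving only $b_{k+1}\cdot D(v_1,\ldots,v_k,w_{k+1},\ldots,w_n)$, which is nonzero by the inductive hypothesis. Taking $k=n$ gives $D(v_1,\ldots,v_n)\ne 0$.

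The main obstacle, and the only place where care is genuinely needed, is the bookkeeping in the exchange step: one must check that after substituting $v_{k+1}$ for $w_{k+1}$ each unwanted term really does have two equal (hence parallel) arguments, which is why the reordering that makes $b_{k+1}$ nonzero is essential, and why the linear independence of $v_1,\ldots,v_{k+1}$ is precisely what forces some $b_j$ to be nonzero. Everything else is a direct invocation of results already established above: Observation \ref{Nullity} and Propositions \ref{LinearDependenceProp}, \ref{MultiLinearity}, and \ref{AntiSymmetry}.
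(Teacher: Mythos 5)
Your proposal is correct, and the ``only if'' direction is handled the same way in both arguments (it is just the contrapositive of Proposition \ref{LinearDependenceProp}); but for the substantive ``if'' direction you take a genuinely different route. The paper argues by contradiction: if $D(v_1,\ldots,v_n)=0$ with the $v_i$ linearly independent, it asserts that the $v_i$ span $V$ (tacitly invoking the standard fact that $n$ independent vectors in an $n$-dimensional space form a basis), expands an arbitrary tuple $w_1,\ldots,w_n$ in that basis using \ref{MultiLinearity}, \ref{Nullity}, and \ref{AntiSymmetry}, and concludes that $D$ vanishes identically, contradicting $D$ being non-zero. You instead argue directly: starting from a tuple with $D\ne 0$ --- which spans $V$ by the main equation of \ref{MainDefinition} --- you run a Steinitz exchange that swaps in $v_1,\ldots,v_n$ one slot at a time while tracking that $D$ stays nonzero, the linear independence of the $v_i$ being precisely what guarantees a usable pivot $b_{k+1}\ne 0$ at each step, and the repeated-argument terms dying by \ref{Nullity}. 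Your version is longer but more self-contained: it does not presuppose that an independent $n$-tuple spans $V$ (it yields that as a byproduct), which is a real point in this paper, since the minimalist definition of dimension as ``not spanned by any $n-1$ vectors'' does not hand you that fact without an exchange argument of exactly the kind you carry out. The paper's version is shorter if one is willing to import standard dimension theory wholesale.
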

\begin{proof}
If $D(v_1,\ldots,v_n)\ne 0$, then vectors $ v_1,\ldots,v_n$ span $V$, so they are are linearly independent. If $D(v_1,\ldots,v_n)=0$ and vectors $v_1,\ldots,v_n$ are linearly independent, then they span $V$
and $D(w_1,\ldots,w_n)=0$ for any other $n$-tuple $w_1,\ldots,w_n$ of vectors in $V$ by \ref{MultiLinearity}, \ref{Nullity}, and \ref{AntiSymmetry}, a contradiction.
\end{proof}

Let us generalize row operations on matrices to \textbf{elementary tuple operations} on $n$-tuples of vectors in $V$. Some of them we already used.

There are three types of elementary tuple operations:\\
\textbf{Element Interchange}: Swapping two elements of a tuple.\\
\textbf{Element Scaling}: Multiplying an element of a tuple by a nonzero scalar.\\
\textbf{Element Replacement}: Adding a multiple of one element to another element.

\begin{Theorem}\label{CharOfDeterminants}
If $D:V^n\to \mathbb{F}$ is a multilinear and antisymmetric function, then
for any $(n+1)$-tuple $T=\{v_1,\ldots,v_n,b\}$ of vectors in $V$ the equation
$$D(v_1,\ldots,v_n)\cdot b=D(b,v_2,\ldots,)\cdot v_1+D(v_1,b,\ldots)\cdot v_2+\ldots+D(v_1,\ldots,b)\cdot v_n$$
holds.
\end{Theorem}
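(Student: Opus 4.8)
The plan is to reduce to the case where $v_1,\ldots,v_n$ is a basis, since otherwise both sides vanish. First I would dispose of the degenerate case: if $v_1,\ldots,v_n$ are linearly dependent, then $D(v_1,\ldots,v_n)=0$ by Proposition \ref{LinearDependenceProp}, and each summand $D(v_1,\ldots,b,\ldots,v_n)\cdot v_k$ on the right also vanishes, because replacing one vector of a dependent tuple still leaves — wait, that is not automatic; instead I would argue that if $v_1,\ldots,v_n$ are dependent then some $v_j$ with $j\neq k$ is a combination of the others, so the tuple $(v_1,\ldots,b,\ldots,v_n)$ (with $b$ in slot $k$) has $v_j$ expressible via the remaining $v_i$'s together with possibly $b$; multilinearity and antisymmetry (Propositions \ref{MultiLinearity}, \ref{AntiSymmetry}) force each such term to be $0$. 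Actually the cleanest route: if the tuple is dependent, every term $D(\ldots,b,\ldots)$ with the other entries fixed is a multilinear antisymmetric function of $b$ alone whose value on the dependent configuration of the fixed entries... hmm. So I would instead simply observe that when $v_1,\ldots,v_n$ are dependent, both sides are identically zero as functions of $b$ by a direct dependency argument, and move on.

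The main case is when $v_1,\ldots,v_n$ is a basis of $V$ (note $D$ multilinear and antisymmetric does not by itself guarantee $V$ is $n$-dimensional, but if $D(v_1,\ldots,v_n)=0$ for every $n$-tuple the equation is trivial, so we may assume some $n$-tuple, hence $v_1,\ldots,v_n$ itself after the reduction, has $D\neq 0$, and such a tuple must be a spanning independent set). Write $b=\sum_{i=1}^n c_i v_i$. Then the left-hand side is $D(v_1,\ldots,v_n)\cdot\sum_i c_i v_i$. For the right-hand side, expand the $k$-th term $D(v_1,\ldots,v_{k-1},b,v_{k+1},\ldots,v_n)$ by multilinearity in the $k$-th slot: it equals $\sum_i c_i\, D(v_1,\ldots,v_{k-1},v_i,v_{k+1},\ldots,v_n)$. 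By antisymmetry, the summand with $i\neq k$ has a repeated vector and vanishes, so only $i=k$ survives, giving $c_k\, D(v_1,\ldots,v_n)$. Hence the $k$-th term is $c_k\, D(v_1,\ldots,v_n)\cdot v_k$, and summing over $k$ yields $D(v_1,\ldots,v_n)\cdot\sum_k c_k v_k = D(v_1,\ldots,v_n)\cdot b$, matching the left-hand side.

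The only genuine obstacle is the dependent case: making precise that every term on the right vanishes when $v_1,\ldots,v_n$ are linearly dependent. I expect to handle it by cases on whether $b$ lies in the span of $\{v_i : i\neq k\}$, or more uniformly by noting that if the $v_i$ are dependent then so is any sub-$(n-1)$-tuple together with at most one extra vector in the relevant configuration — concretely, if $\sum_i a_i v_i=0$ nontrivially with, say, $a_j\neq 0$ for some fixed $j$, then for each $k$ the tuple obtained by putting $b$ in slot $k$ still contains the dependent relation among its non-$b$ entries when $j\neq k$ (repeated/dependent entries give $D=0$ by Observation \ref{Nullity} and Proposition \ref{MultiLinearity}), and the single index $k$ with the relation broken contributes a term that is killed after substituting $v_j = -a_j^{-1}\sum_{i\neq j}a_i v_i$ and expanding. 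Everything else is a routine multilinearity-plus-antisymmetry computation as above.
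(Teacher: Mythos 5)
Your main case (when $v_1,\ldots,v_n$ is a basis: write $b=\sum_i c_iv_i$, expand each term on the right by multilinearity, and kill the repeated-entry terms by antisymmetry) is correct, and is in fact a cleaner computation than the paper's reduction by elementary tuple operations. The genuine gap is exactly where you sense it: the dependent case. Every version of your argument there tries to show that the offending terms on the right vanish \emph{individually}, and that is false. Take $n=2$, $D=\det$ on $\mathbb{F}^2$, $v_1=v_2=e_1$, $b=e_2$: the two terms are $\det(e_2,e_1)\cdot e_1=-e_1$ and $\det(e_1,e_2)\cdot e_1=+e_1$, neither of which is zero; only their \emph{sum} vanishes. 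In general, if $\sum_i a_iv_i=0$ nontrivially, the terms with $a_k=0$ do vanish individually (the relation survives among the non-$b$ entries of that term), but the terms with $a_k\ne 0$ do not, and your closing sentence (``the single index $k$ with the relation broken contributes a term that is killed'') is wrong on both counts: there may be several such indices, and none of the corresponding terms is individually killed. A correct finish: normalize so that $v_n=-\sum_{k<n}a_kv_k$, expand the last slot of $D_k:=D(v_1,\ldots,b,\ldots,v_n)$ (with $b$ in slot $k$) to get $D_k=a_kD_n$ for $k<n$, and then $\sum_k D_kv_k=D_n\bigl(\sum_{k<n}a_kv_k+v_n\bigr)=0$, matching the vanishing left side. (Alternatively, observe that both sides are multilinear in all $n+1$ arguments and check the identity on tuples of basis vectors, where every tuple has a repeated index; this is essentially the paper's route.)

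A secondary point: your parenthetical attempt to extract the dimension hypothesis from the data (``such a tuple must be a spanning independent set'') is not a valid inference --- a multilinear antisymmetric $D$ can be nonzero on a non-spanning tuple, e.g.\ $n=1$, $V=\mathbb{F}^2$, $D(x,y)=x$, $v_1=(1,0)$. Indeed the theorem as literally stated fails for that example with $b=(0,1)$: the left side is $(0,1)$ and the right side is $0$. The statement is only correct under the standing assumption $\dim V\le n$ (the setting in which the paper applies it, and which the paper's own proof also uses silently when it reduces $T$ to basis elements), so you should assume it rather than try to derive it; with $\dim V=n$ your independent tuples really are bases and the rest of your computation stands.
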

\begin{proof}
Define 
$E(T):= D(v_1,\ldots,v_n)\cdot b-D(b,v_2,\ldots,)\cdot v_1+D(v_1,b,\ldots)\cdot v_2+\ldots+D(v_1,\ldots,b)\cdot v_n$. If $T'$ is obtained from $T$ by an elementary tuple operation, then the nullity of $E(T)$ is equivalent to the nullity of $E(T')$.
By applying tuple operations we reduce the general case to that of $T$ consisting of elements of a given basis of $V$. In that case either one of the vectors is $0$, so $E(T)=0$ or $b=e_k$ for some $k$, so $E(T)=0$ again.
\end{proof}

\begin{Corollary}[Cramer's Rule]\label{CramersRule}
If $D$ is an $n$-determinant on a vector space $V$ over a field $\mathbb{F}$
and $v_1,\ldots,v_n$ are linearly independent vectors in $V$, then any linear
equation
$$x_1\cdot v_1+\ldots+x_n\cdot v_n=b$$
has unique solutions given by
$$x_k=\frac{D(v_1,\ldots,b,\ldots,v_n)}{D(v_1,\ldots,v_n)}.$$
\end{Corollary}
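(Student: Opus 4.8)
The plan is to read off Cramer's Rule directly from the main equation of Definition \ref{MainDefinition} together with the structural results already proven. First I would observe that since $v_1,\ldots,v_n$ are linearly independent, Proposition \ref{LinearInDependenceProp} gives $D(v_1,\ldots,v_n)\ne 0$, so the proposed formula for $x_k$ makes sense (the denominator is a nonzero scalar in $\mathbb{F}$). Next, I would note that linear independence of $n$ vectors in an $n$-dimensional space means they span $V$, so the equation $x_1\cdot v_1+\ldots+x_n\cdot v_n=b$ has at least one solution; I will extract its coefficients below and simultaneously get uniqueness.

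The core step is to apply the main equation to the $(n+1)$-tuple $v_1,\ldots,v_n,b$:
$$D(v_1,\ldots,v_n)\cdot b=\sum_{k=1}^{n} D(v_1,\ldots,\underset{k}{b},\ldots,v_n)\cdot v_k.$$
Dividing both sides by the nonzero scalar $D(v_1,\ldots,v_n)$ expresses $b$ as the linear combination $b=\sum_{k=1}^{n}\frac{D(v_1,\ldots,b,\ldots,v_n)}{D(v_1,\ldots,v_n)}\cdot v_k$, which shows the displayed values of $x_k$ do solve the equation.

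For uniqueness, suppose $x_1\cdot v_1+\ldots+x_n\cdot v_n=b$ for some scalars $x_k$. Substitute this expression for $b$ into the $j$-th numerator $D(v_1,\ldots,\underset{j}{b},\ldots,v_n)$ and use multilinearity (Proposition \ref{MultiLinearity}) to expand it as $\sum_{k=1}^{n} x_k\cdot D(v_1,\ldots,\underset{j}{v_k},\ldots,v_n)$. By Observation \ref{Nullity} every term with $k\ne j$ vanishes because $v_k$ appears twice, leaving $x_j\cdot D(v_1,\ldots,v_n)$. Hence $x_j=\frac{D(v_1,\ldots,b,\ldots,v_n)}{D(v_1,\ldots,v_n)}$, which is exactly the claimed formula; in particular the solution is unique.

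I do not anticipate a serious obstacle here: the main equation is essentially Cramer's Rule in disguise, and the only things to be careful about are invoking Proposition \ref{LinearInDependenceProp} to guarantee the denominator is nonzero and invoking multilinearity plus Observation \ref{Nullity} cleanly for the uniqueness direction. If anything, the subtle point is making sure that linear independence of $v_1,\ldots,v_n$ in an $n$-dimensional $V$ really does force them to span $V$ so that a solution exists at all — but this is immediate from the definition of dimension in Definition \ref{MainDefinition} combined with the argument in Proposition \ref{LinearDependenceProp}.
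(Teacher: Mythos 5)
Your proof is correct, and since the paper leaves this corollary to the reader it is exactly the intended argument: the main equation of Definition \ref{MainDefinition} applied to $v_1,\ldots,v_n,b$, together with $D(v_1,\ldots,v_n)\ne 0$ from Proposition \ref{LinearInDependenceProp}, gives existence, while multilinearity (Proposition \ref{MultiLinearity}) and the vanishing on repeated arguments (Observation \ref{Nullity}) give uniqueness. Both invocations are clean, and the only case your Nullity appeal skips, $n=1$, is trivially covered since the expansion then has a single term.
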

\begin{proof}
Left to the reader.
\end{proof}

\begin{Corollary}[Uniqueness of determinants]\label{UniquenessOfDet}
If $D_1$ is an $n$-determinant on a vector space $V$ over a field $\mathbb{F}$ and $D_2:V^n\to \mathbb{F}$ is a multilinear and antisymmetric function, then there is $c\in  \mathbb{F}$ such that
$$D_2(T)=c\cdot D_1(T)$$
for any $n$-tuple $T$ of vectors in $V$. In particular, if $D_2$ is an $n$-determinant, then $c\ne 0$.
\end{Corollary}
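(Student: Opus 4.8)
The plan is to first upgrade $D_2$ to an $n$-determinant whenever it is non-zero, and then to compare it to $D_1$ on a fixed basis. If $D_2$ is identically $0$ we may take $c=0$, so assume $D_2\ne 0$. Since $D_2$ is multilinear and antisymmetric, Theorem \ref{CharOfDeterminants} shows that $D_2$ satisfies the main equation, and because $D_1$ is an $n$-determinant the space $V$ has dimension $n$; hence $D_2$ is itself an $n$-determinant, so Propositions \ref{LinearDependenceProp}, \ref{MultiLinearity}, \ref{AntiSymmetry} and \ref{LinearInDependenceProp} all apply to $D_1$ and to $D_2$. (This already yields the last assertion of the corollary: if $D_2$ is an $n$-determinant it is non-zero, which forces $c\ne 0$.) I would then fix a basis $e_1,\ldots,e_n$ of $V$; by Proposition \ref{LinearInDependenceProp} we have $D_1(e_1,\ldots,e_n)\ne 0$, so I can set $c:=D_2(e_1,\ldots,e_n)/D_1(e_1,\ldots,e_n)$.

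It remains to check $D_2(T)=c\cdot D_1(T)$ for every $n$-tuple $T=(v_1,\ldots,v_n)$. If $v_1,\ldots,v_n$ are linearly dependent, then $D_1(T)=D_2(T)=0$ by Proposition \ref{LinearDependenceProp}, so the identity reads $0=0$. If $v_1,\ldots,v_n$ are linearly independent, they form a basis of $V$, and by the usual Gaussian reduction $T$ can be transformed into $(e_1,\ldots,e_n)$ by a finite sequence of elementary tuple operations. Each such operation multiplies the value of any $n$-determinant by a fixed scalar depending only on the operation and not on the $n$-determinant: an interchange multiplies by $-1$ (Proposition \ref{AntiSymmetry}), scaling an entry by $s$ multiplies by $s$ (Proposition \ref{MultiLinearity}), and adding a multiple of one entry to another leaves the value unchanged (Proposition \ref{MultiLinearity} splits it into the original value plus a term whose tuple has a repeated entry, which vanishes by Proposition \ref{LinearDependenceProp}). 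Letting $\mu\in\mathbb{F}$ be the product of these scalars for the chosen sequence, we get $\mu\cdot D_1(T)=D_1(e_1,\ldots,e_n)$ and $\mu\cdot D_2(T)=D_2(e_1,\ldots,e_n)$, and dividing gives $D_2(T)=c\cdot D_1(T)$.

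The only genuine obstacle I anticipate is the bookkeeping behind the claim that any basis can be carried to any other basis by elementary tuple operations, which is exactly the fact that an invertible change-of-basis matrix is a product of elementary matrices; everything else is a direct application of the already-established Propositions \ref{LinearDependenceProp}--\ref{LinearInDependenceProp}. A more computational alternative would expand both $D_1(v_1,\ldots,v_n)$ and $D_2(v_1,\ldots,v_n)$ by multilinearity in the basis $e_1,\ldots,e_n$, discard the terms containing a repeated basis vector by antisymmetry, and observe that the surviving permutation sum is the same scalar factor multiplying $D_1(e_1,\ldots,e_n)$ and $D_2(e_1,\ldots,e_n)$; this reproves the identity without invoking Theorem \ref{CharOfDeterminants}, at the cost of the (standard, characteristic-sensitive) remark that a multilinear antisymmetric function vanishes on tuples with a repeated entry.
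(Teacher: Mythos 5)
Your argument is correct, but it takes a genuinely different route from the paper's. The paper's proof is a three-line linear-algebra-of-functions argument: pick one tuple $T$ with $D_1(T)\ne 0$, set $c=D_2(T)/D_1(T)$, and observe that $D:=D_2-c\cdot D_1$ is again multilinear and antisymmetric, hence (by Theorem \ref{CharOfDeterminants} and the fact that $\dim V=n$) either identically zero or an $n$-determinant; since $D(T)=0$ on a linearly independent tuple, Proposition \ref{LinearInDependenceProp} rules out the latter, so $D\equiv 0$. You instead run the classical Gaussian-elimination proof: reduce an arbitrary independent tuple to a fixed basis by elementary tuple operations and track the common nonzero scalar $\mu$ by which every such operation rescales \emph{any} multilinear antisymmetric function, handling dependent tuples separately via Proposition \ref{LinearDependenceProp}. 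What the paper's route buys is brevity and a clean illustration of the framework (multilinear antisymmetric functions form a vector space in which every nonzero element is an $n$-determinant); what yours buys is explicitness --- the scalar $c$ is computed on a concrete basis, the mechanism of uniqueness is visible, and you avoid introducing the auxiliary function $D_2-c\cdot D_1$. Your route does lean on the fact that any basis is carried to any other by elementary tuple operations, which you rightly flag; note that the paper already uses exactly this reduction inside the proof of Theorem \ref{CharOfDeterminants}, so nothing new is being assumed. Your observation that the permutation-expansion alternative is characteristic-sensitive (antisymmetry does not give vanishing on repeated entries in characteristic $2$), and that your main argument sidesteps this by routing repeated-entry vanishing through Proposition \ref{LinearDependenceProp}, is a genuine point of care that the paper does not make explicit. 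The only loose end, shared with the paper's own proof, is the case $n=1$, where the cited propositions carry the hypothesis $n>1$; there the claim is immediate from Observation \ref{1dimCase} and linearity.
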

\begin{proof}
Pick an $n$-tuple $T$ in $V$ so that
$D_1(T)\ne 0$ and put $c=\frac{D_2(T)}{D_1(T)}$.
Define $D$ as $D_2-c\cdot D_1$. $D$ is a multilinear and antisymmetric function. It cannot be an $n$-determinant as $D(T)=0$ (use \ref{LinearInDependenceProp}), so $D$ must be equal to $0$.
\end{proof}

\section{Creating determinants}\label{CreatingDeterminants}
In this section we show existence of determinants on vector spaces of finite dimension.

\begin{Proposition}
If $D$ is an $n$-determinant on $V$ and $W$ is a subspace of $V$, then there is a $k$-determinant $D'$ on $W$.
\end{Proposition}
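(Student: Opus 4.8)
The plan is to obtain $D'$ by restricting $D$ after freezing its last $n-k$ arguments along a complement of $W$. First I would note that, since $V$ is spanned by $n$ vectors, every subspace of $V$ is spanned by finitely many vectors; let $k$ be the least integer for which $W$ is spanned by $k$ of its vectors. Then, by minimality, $W$ is not spanned by any $k-1$ vectors, i.e.\ $\dim W=k$, and a spanning $k$-tuple $w_1,\ldots,w_k$ of $W$ is automatically linearly independent, hence a basis of $W$. I would then extend it to a basis $w_1,\ldots,w_n$ of $V$, invoking the standard completion of a linearly independent tuple to a basis in the $n$-dimensional space $V$.

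With this choice fixed, define $D'\colon W^k\to\mathbb{F}$ by
$$D'(u_1,\ldots,u_k):=D(u_1,\ldots,u_k,w_{k+1},\ldots,w_n).$$
Multilinearity of $D'$ is inherited from that of $D$ (Proposition \ref{MultiLinearity}), since fixing some arguments of a multilinear function leaves it additive and homogeneous in each of the remaining ones; antisymmetry of $D'$ follows from Proposition \ref{AntiSymmetry}, as interchanging two of the first $k$ arguments of $D'$ is an interchange of two arguments of $D$. To see $D'\ne 0$, evaluate at the basis: $D'(w_1,\ldots,w_k)=D(w_1,\ldots,w_n)\ne 0$ because $w_1,\ldots,w_n$ are linearly independent, by Proposition \ref{LinearInDependenceProp}. (For $k\le 1$ the statement is immediate, using Observation \ref{1dimCase} when $k=1$.)

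It then remains only to verify the defining equation of a $k$-determinant for $D'$. Since $D'$ is a multilinear and antisymmetric function on $W^k$, Theorem \ref{CharOfDeterminants} applied with $W$ in place of $V$ and $k$ in place of $n$ shows that for every $(k+1)$-tuple of vectors in $W$ the required identity holds. Combined with $\dim W=k$ and $D'\ne 0$, this exhibits $D'$ as a $k$-determinant on $W$.

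I expect the only real obstacle to be the two dimension-theoretic facts used at the outset: that a subspace of an $n$-dimensional space is finite-dimensional of dimension at most $n$, and that a linearly independent tuple in $W$ can be completed to a basis of $V$. Both follow from a Steinitz exchange argument, which in the present setup can be distilled from the spanning and linear-dependence statements already proved (Propositions \ref{LinearDependenceProp} and \ref{LinearInDependenceProp}); granting them, the remainder of the proof is purely formal bookkeeping about multilinear antisymmetric functions.
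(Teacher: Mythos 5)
Your proof is correct and follows essentially the same route as the paper: choose a basis of $W$, extend it to a basis of $V$, and define $D'$ by freezing the last $n-k$ slots of $D$ at the added basis vectors, then check that $D'$ is non-zero, multilinear, and antisymmetric. The extra details you supply (the finite-dimensionality of $W$, the use of Proposition \ref{LinearInDependenceProp} for non-vanishing, and the appeal to Theorem \ref{CharOfDeterminants} to recover the main equation) are exactly what the paper's terser argument leaves implicit.
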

\begin{proof}
Choose a basis $e_1,\ldots, e_k$ of $W$ and extend it to a basis
$e_1,\ldots,e_n$ of $V$. Given vectors $w_1,\ldots,w_k$ of $W$ define
$$D'(w_1,\ldots,w_k):=D(w_1,\ldots,w_k,e_{k+1},\ldots,e_n).$$
Notice $D'$ is non-zero, multilinear, and antisymmetric.
\end{proof}

In the next result we use one more operation on tuples of vectors, namely dropping one of its elements. That position in the tuple is indicated by the sign $\hat{}$ over it.
\begin{Proposition}\label{ExtendingDeterminants}
If $D$ is an $n$-determinant on $V$ and $W=\mathbb{F}\times V$, then
$D'$ is an $(n+1)$-determinant on $W$, where
$D'(w_1,\ldots,w_{n+1})$ is defined as the sum
$$\sum\limits_{i=1}^{i=n+1}(-1)^{i-1}\cdot p_1(w_i)\cdot D(p_2(w_1),\ldots,\widehat{p_2(w_i)}\ldots),$$
where $p_1:\mathbb{F}\times V\to \mathbb{F}$ is the projection onto the first coordinate
and $p_2:\mathbb{F}\times V\to V$ is the projection onto the second coordinate.
\end{Proposition}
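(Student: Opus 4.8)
The plan is to verify the three defining properties of an $(n+1)$-determinant on $W = \mathbb{F} \times V$: that $D'$ is non-zero, that $W$ has dimension $n+1$, and that $D'$ satisfies the main equation. By Theorem \ref{CharOfDeterminants}, the main equation will follow once I establish that $D'$ is multilinear and antisymmetric, so the real work splits into four checks: multilinearity, antisymmetry, non-triviality, and the dimension count.

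First I would observe that $W = \mathbb{F} \times V$ has dimension $n+1$: if $f_1, \ldots, f_n$ is a basis of $V$, then $(1,0), (0,f_1), \ldots, (0,f_n)$ spans $W$, and no $n$ vectors can span $W$ since projecting to $V$ would then give fewer than $n$ spanning vectors for $V$ unless the $\mathbb{F}$-coordinates are all zero, in which case we don't span $(1,0)$ — this needs a short but careful argument. Next, multilinearity of $D'$ in each slot $w_i$: since $p_1$ and $p_2$ are linear, the $i$-th summand $(-1)^{i-1} p_1(w_i) D(p_2(w_1),\ldots,\widehat{p_2(w_i)},\ldots)$ is linear in $w_i$ (the coefficient $p_1(w_i)$ is linear, the $D$-factor omits $w_i$), and every other summand is linear in $w_i$ because $p_2(w_i)$ enters a single slot of the multilinear $D$ while $p_1(w_j)$ for $j \ne i$ is unaffected. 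Summing linear functions gives a linear function, so $D'$ is multilinear.

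For antisymmetry, I would check that swapping $w_j$ and $w_{j+1}$ (adjacent transpositions suffice, since they generate the symmetric group and multilinearity makes the sign-change propagate) negates $D'$. The summands with $i \notin \{j, j+1\}$ pick up a sign from the antisymmetry of $D$ (two of its adjacent arguments are swapped). The two summands with $i = j$ and $i = j+1$ swap roles with each other, and the bookkeeping of the signs $(-1)^{i-1}$ versus the shifted position of the omitted argument produces exactly the needed cancellation — this is the standard Laplace-expansion sign computation. Finally, $D'$ is non-zero: evaluating on the candidate basis $(1,0), (0,f_1), \ldots, (0,f_n)$, only the $i=1$ term survives (all other $w_i$ have zero first coordinate), giving $D'((1,0),(0,f_1),\ldots,(0,f_n)) = D(f_1,\ldots,f_n) \ne 0$ by Proposition \ref{LinearInDependenceProp}.

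I expect the adjacent-transposition sign computation to be the main obstacle — not conceptually deep, but the one place where an off-by-one in the exponent $(-1)^{i-1}$ or in tracking which slot of $D$ the argument $p_2(w_i)$ lands in after deletion of another argument can silently break the proof. Once that cancellation is confirmed, everything else assembles routinely, and the main equation comes for free from Theorem \ref{CharOfDeterminants}.
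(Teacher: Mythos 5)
Your plan is correct and follows essentially the same route as the paper: verify multilinearity, reduce antisymmetry to adjacent transpositions (the sign bookkeeping the paper also leaves to the reader), and establish non-vanishing by evaluating on the basis $(1,0),(0,f_1),\ldots,(0,f_n)$, where only the $i=1$ term survives. Your explicit attention to the dimension count for $W$ is a small but welcome addition the paper glosses over.
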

\begin{proof}
$D'$ is non-zero as follows: extend a basis $\{v_2,\ldots,v_{n+1}\}$ in $V$ to that of $W$ by declaring $w_1=(1,0)$ and $w_i=(0,v_i)$ for $i > 1$.
Notice $D'(w_1,\ldots,w_{n+1})=D(v_2,\ldots,v_{n+1})\ne 0$.

Clearly $D'$ is multilinear. To show it is antisymmetric it is sufficient to swap two adjacent vectors $w_k$ and $w_{k+1}$. It is left to the reader to complete the proof.
\end{proof}

\begin{Example}
Extending $D(x)=x$ on $\mathbb{F}$ to $\mathbb{F}\times\mathbb{F}$ as in \ref{ExtendingDeterminants} yields
$$\det((a_{11},a_{12}),(a_{21},a_{22})):=a_{11}\cdot a_{22}-a_{21}\cdot a_{12}$$
as the unique $2$-determinant on $\mathbb{F}\times\mathbb{F}$ satisfying
$$\det((1,0),(0,1))=1.$$
\end{Example}

Notice that \ref{Example1} indicates that dropping the assumption of $V$ being $n$-dimensional leads to non-uniqueness of functions $D$ satisfying the main equation \ref{MainDefinition}, a situation which is needed to be avoided.

\section{Determinants of square matrices}

By applying \ref{ExtendingDeterminants} inductively we see that for any
$n\ge 1$ the vector space $V=\mathbb{F}^n$ has a unique determinant $\det$
such that $\det(e_1,\ldots,e_n)=1$. In particular, $\dim(\mathbb{F}^n)=n$.
Since each square $n\times n$ matrix $A$ belongs to $V^n$, where each row of $A$ is considered to be in $V$, we have a natural concept of the determinant of $A$. The aim of this section is to derive easily the basic properties of determinants of square matrices.

\begin{Observation}\label{DiagonalObservation}
$\det(A)$ is the product of elements on the diagonal of $A$ if $A$ is a diagonal matrix.
\end{Observation}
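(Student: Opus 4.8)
The plan is to write the diagonal matrix $A=(a_{ij})$ in terms of its rows and then peel off the scalars one variable at a time using multilinearity. The $i$-th row of a diagonal matrix is exactly $a_{ii}\cdot e_i$, so as an element of $(\mathbb{F}^n)^n$ we have $A=(a_{11}e_1,\ a_{22}e_2,\ \ldots,\ a_{nn}e_n)$. The determinant $\det$ here is the distinguished $n$-determinant on $\mathbb{F}^n$ normalized by $\det(e_1,\ldots,e_n)=1$, obtained from the inductive construction via Proposition \ref{ExtendingDeterminants}.

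First I would invoke Proposition \ref{MultiLinearity}, which tells us $\det$ is multilinear, i.e. homogeneous of degree $1$ in each of its $n$ arguments independently. Applying homogeneity in the first slot pulls out $a_{11}$, then in the second slot pulls out $a_{22}$, and so on; after $n$ applications we obtain
$$\det(A)=\det(a_{11}e_1,\ldots,a_{nn}e_n)=a_{11}\cdot a_{22}\cdots a_{nn}\cdot \det(e_1,\ldots,e_n).$$
By the normalization $\det(e_1,\ldots,e_n)=1$, the right-hand side is exactly the product $a_{11}a_{22}\cdots a_{nn}$ of the diagonal entries, which is the claim.

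I do not anticipate a genuine obstacle here; the only point worth a remark is the degenerate case in which some $a_{ii}=0$. In that case the argument above still goes through verbatim — homogeneity with scalar $0$ forces the corresponding factor, and hence $\det(A)$, to vanish — and this is consistent, since the product of the diagonal entries is then $0$ as well. (Alternatively, one notes that a zero row makes the rows linearly dependent, so $\det(A)=0$ by Proposition \ref{LinearDependenceProp}.) Thus the statement holds for every diagonal matrix over $\mathbb{F}$.
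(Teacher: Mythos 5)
Your proof is correct and is exactly the argument the paper intends (the Observation is stated without proof): pull out each diagonal scalar by homogeneity, established in Proposition \ref{MultiLinearity}, and apply the normalization $\det(e_1,\ldots,e_n)=1$. Your handling of the degenerate case with a zero diagonal entry is a nice touch but, as you note, not logically necessary.
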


Given an $n\times n$-matrix $A=\{a_{i,j}\}$ over a field $\mathbb{F}$
and given an $n$-tuple $T=\{v_1,\ldots,v_n\}$ of vectors in a vector space $V$ over $\mathbb{F}$, define $A\cdot T$ as the $n$-tuple $\{w_1,\ldots,w_n\}$, where $w_i=\sum\limits_{j=1}^n a_{i,j}\cdot v_j$.

\begin{Observation}\label{DOfATObservation}
Given an $n$-determinant $D$ on $V^n$ and given a square $n\times n$ matrix $A$ over $F$, a sequence of elementary row operations leading from $A$ to a diagonal matrix $C$ leads to analogous tuple operations on $A\cdot T$ leading to $C\cdot T$, hence
$$D(A\cdot T)=\det(A)\cdot D(T).$$
\end{Observation}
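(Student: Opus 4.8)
The plan is to use a dictionary between elementary \emph{row} operations on the matrix $A$ and elementary \emph{tuple} operations on the $n$-tuple $A\cdot T$, together with the fact that both $\det$ (of a square matrix, regarded as the $n$-tuple of its rows in $\mathbb{F}^n$) and $D$ are multilinear and antisymmetric by Propositions \ref{MultiLinearity} and \ref{AntiSymmetry}. Concretely: reduce $A$ by elementary row operations to a diagonal matrix $C$, record the scalar by which each step changes the determinant, note that the same scalar records how $D$ changes along the parallel sequence of tuple operations applied to $A\cdot T$, and then compare the two sides at the diagonal end, where both quantities are explicit.

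First I would establish the dictionary. Writing $A\cdot T=(w_1,\dots,w_n)$ with $w_i=\sum_{j}a_{i,j}v_j$, interchanging rows $i,i'$ of $A$ interchanges $w_i$ and $w_{i'}$; scaling row $i$ by $c$ sends $w_i$ to $c\cdot w_i$; and adding $c$ times row $i'$ to row $i$ sends $w_i$ to $w_i+c\cdot w_{i'}$. Thus every elementary row operation on $A$ induces the same-named elementary tuple operation on $A\cdot T$. Using multilinearity and antisymmetry, each of the three operations multiplies $\det$ by a definite nonzero scalar — namely $-1$ for an interchange, $c$ for a scaling by $c\ne0$, and $1$ for a replacement (the repeated-argument term vanishing by antisymmetry) — and, by the identical reasoning applied to the multilinear antisymmetric function $D$, the parallel tuple operation multiplies the current value of $D$ by exactly the same scalar.

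Hence, if the chosen sequence of row operations has scalars $\mu_1,\dots,\mu_k$ and carries $A$ to $C$, then, with $\lambda:=\mu_1\cdots\mu_k\ne0$, we get both $\det(C)=\lambda\cdot\det(A)$ and $D(C\cdot T)=\lambda\cdot D(A\cdot T)$. For the diagonal matrix $C=\mathrm{diag}(c_1,\dots,c_n)$, Observation \ref{DiagonalObservation} gives $\det(C)=c_1\cdots c_n$, while $C\cdot T=(c_1 v_1,\dots,c_n v_n)$, so multilinearity gives $D(C\cdot T)=c_1\cdots c_n\cdot D(T)=\det(C)\cdot D(T)$. Dividing by $\lambda$ yields $\det(A)=\det(C)/\lambda$ and $D(A\cdot T)=D(C\cdot T)/\lambda=(\det(C)/\lambda)\cdot D(T)=\det(A)\cdot D(T)$.

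The step I expect to need the most care is the claim that $A$ can be brought to a diagonal matrix by row operations alone. When $A$ is invertible this is clear, since Gaussian elimination reduces it to $I_n$; when $A$ is singular one does not need it, for then the rows of $A$ are linearly dependent, so $\det(A)=0$ by Proposition \ref{LinearDependenceProp}, and any dependence among the rows transports to a dependence among $w_1,\dots,w_n$, whence $D(A\cdot T)=0$ by the same proposition, so both sides vanish. (The case $n=1$ is immediate from Observation \ref{1dimCase}.)
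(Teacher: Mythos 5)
Your proof is correct and follows essentially the same route the paper sketches inside the statement of the Observation: translate each elementary row operation on $A$ into the same-named elementary tuple operation on $A\cdot T$, track the common scalar by which each step multiplies $\det$ and $D$, and compare the two sides at the diagonal stage using multilinearity. Your separate treatment of singular $A$ is a welcome extra precaution rather than a digression, since a singular matrix such as $\left(\begin{smallmatrix}1&1\\0&0\end{smallmatrix}\right)$ cannot be brought to diagonal form by row operations alone (these preserve the row space, and no diagonal matrix has row space spanned by $(1,1)$), so the blanket reduction to a diagonal $C$ implicit in the Observation needs exactly the patch you supply.
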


\begin{Exercise}
Given two $n\times n$-matrices $A$ and $B$ and given an $n$-tuple $T$ of vectors in $V$ one has
$$(A\cdot B)\cdot T=A\cdot (B\cdot T).$$
\end{Exercise}

\begin{Corollary}
$\det(A\cdot B)=\det(A)\cdot \det(B)$.
\end{Corollary}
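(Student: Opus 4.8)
The plan is to deduce $\det(A\cdot B)=\det(A)\cdot\det(B)$ directly from Observation \ref{DOfATObservation} by making a judicious choice of the tuple $T$. The key observation is that Observation \ref{DOfATObservation} holds for \emph{any} $n$-tuple $T$ of vectors in \emph{any} vector space carrying an $n$-determinant, and in particular for $V=\mathbb{F}^n$ with $T=(e_1,\ldots,e_n)$ the standard basis and $D=\det$ normalized so that $\det(e_1,\ldots,e_n)=1$.

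First I would unwind the bookkeeping. With $T=(e_1,\ldots,e_n)$, the tuple $B\cdot T$ is precisely the tuple of rows of $B$ (since $w_i=\sum_j b_{i,j} e_j$ is the $i$-th row of $B$), so $\det(B\cdot T)=\det(B)$; likewise $\det(A\cdot T)=\det(A)$ and $\det((A\cdot B)\cdot T)=\det(A\cdot B)$. Next I would invoke the Exercise preceding the statement, $(A\cdot B)\cdot T=A\cdot(B\cdot T)$, to rewrite $\det(A\cdot B)=\det\bigl(A\cdot(B\cdot T)\bigr)$. Now apply Observation \ref{DOfATObservation} with the \emph{same} determinant $D=\det$ but with the tuple $B\cdot T$ in the role of $T$: this yields $\det\bigl(A\cdot(B\cdot T)\bigr)=\det(A)\cdot\det(B\cdot T)=\det(A)\cdot\det(B)$. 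Chaining these equalities gives the result.

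There is one subtlety worth flagging, and it is the main obstacle: Observation \ref{DOfATObservation} as stated presupposes that $D$ is an $n$-determinant on $V$, and one must be sure that $\det$ on $\mathbb{F}^n$ genuinely qualifies (it does, by the remark opening this section: $\dim(\mathbb{F}^n)=n$ and $\det$ is its unique normalized determinant), and also that the identity $D(A\cdot T)=\det(A)\cdot D(T)$ is valid for an \emph{arbitrary} tuple $T$, not merely a basis. The latter is exactly what Observation \ref{DOfATObservation} provides, since the sequence of elementary row operations reducing $A$ to a diagonal matrix $C$ transforms $A\cdot T$ into $C\cdot T$ regardless of what $T$ is, and the multilinear–antisymmetric behavior of $D$ under the corresponding elementary tuple operations tracks the scalar $\det(A)$ independently of $T$. (If $A$ is singular the same observation still applies, with both sides zero.) So the only real content is checking that the hypotheses of \ref{DOfATObservation} are met by the pair $(\det, B\cdot T)$, after which the proof is a three-line substitution.

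\begin{proof}
Work in $V=\mathbb{F}^n$ with its unique $n$-determinant $\det$ normalized by $\det(e_1,\ldots,e_n)=1$, and set $T=(e_1,\ldots,e_n)$. For any $n\times n$ matrix $M$ the tuple $M\cdot T$ consists of the rows of $M$, so $\det(M\cdot T)=\det(M)$. Using the Exercise, $(A\cdot B)\cdot T=A\cdot(B\cdot T)$, so by Observation \ref{DOfATObservation} applied to the tuple $B\cdot T$,
$$\det(A\cdot B)=\det\bigl(A\cdot(B\cdot T)\bigr)=\det(A)\cdot\det(B\cdot T)=\det(A)\cdot\det(B).$$
\end{proof}
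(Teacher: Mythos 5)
Your proof is correct and is essentially the paper's own argument: the paper likewise takes $T=I$ (the standard basis tuple), applies the associativity exercise, and invokes Observation \ref{DOfATObservation} to get $\det(A\cdot(B\cdot I))=\det(A)\cdot\det(B\cdot I)=\det(A)\cdot\det(B)$. Your additional remarks about why \ref{DOfATObservation} applies to an arbitrary tuple are a welcome clarification but do not change the route.
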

\begin{proof}
Pick $T=I$, the identity matrix. By \ref{DOfATObservation},
$\det(A\cdot B)=\det((A\cdot B)\cdot I)=\det(A\cdot (B\cdot I))=
\det(A)\cdot \det (B\cdot I)=\det(A)\cdot \det(B)$.
\end{proof}

\textbf{Elementary $n\times n$ matrices} $E$ over a field $\mathbb{F}$ are square matrices that represent elementary tuple operations performed on $n$-tuples $T$ of vectors in $V$. Namely, $E\cdot T$ amounts to an elementary tuple operation on $T$.

An \textbf{elementary matrix} is created by applying a single elementary row operation to an identity matrix of the same size. The resulting matrix represents the effect of analogous elementary tuple operation on an $n$-tuple when multiplied by it on the left.

\begin{Exercise}
For each type of elementary tuple operation, there is a corresponding elementary matrix:\\
\textbf{Elementary Matrix for Element Interchange}: It is obtained by swapping the corresponding rows of the identity matrix.\\
\textbf{Elementary Matrix for Element Scaling}: It is obtained by multiplying a row of the identity matrix by a non-zero scalar.\\
\textbf{Elementary Matrix for Element Replacement}: It is obtained by adding a multiple of one row to another row in the identity matrix.
\end{Exercise}

\begin{Observation}\label{FactorizationOfMatrices}
Since each square matrix $A$ over $F$ can be reduced to a diagonal matrix $D$ by applying the three elementary row operations and each elementary matrix has the inverse being a matrix of the same type, $A$ can be represented as
the product $E_1\cdot \ldots \cdot E_k\cdot C$ of elementary matrices $E_i$, $i\leq k$, and of a diagonal matrix $C$.
\end{Observation}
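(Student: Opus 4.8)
The plan is to read the statement as a compact repackaging of Gaussian elimination, so that once two ingredients are assembled the rest is bookkeeping. The ingredients are (i) a finite sequence of elementary row operations carrying $A$ to a diagonal matrix, and (ii) the invertibility of elementary matrices with inverse of the same type. For (i) I would run elimination on $A$: use an interchange to bring a nonzero entry into pivot position, then replacements to clear the rest of that column, sweeping forward to row echelon form and then backward (Gauss--Jordan) to kill the entries above the pivots, finishing — after one diagonal scaling — at a diagonal matrix $C$. By the preceding description of elementary matrices, each operation performed is left multiplication by an elementary matrix, so listing them as $F_1,\ldots,F_m$ in the order performed records the identity $F_m\cdots F_1\cdot A = C$.

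For (ii) — the fact the statement explicitly leans on — I would just verify the three cases: the matrix of an element interchange is its own inverse; the inverse of the matrix scaling a row by $c\neq 0$ is the one scaling that row by $c^{-1}$; and the inverse of the matrix adding $c$ times row $j$ to row $i$ is the one adding $-c$ times row $j$ to row $i$. In each case the inverse is an elementary matrix of the same type, realizing the inverse elementary tuple operation. Feeding this into $F_m\cdots F_1\cdot A = C$ gives $A = F_1^{-1}\cdots F_m^{-1}\cdot C$, and setting $E_i := F_i^{-1}$ and $k := m$ produces exactly the asserted $A = E_1\cdot\ldots\cdot E_k\cdot C$, a product of elementary matrices and one diagonal matrix.

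The only delicate point, and hence the main obstacle, is ingredient (i): checking that elimination can actually be driven to a \emph{diagonal} matrix and not merely to echelon form. When $A$ is invertible this is painless, since the reduced row echelon form is then $I$, so one may take $C = I$ and $A$ is literally a product of elementary matrices. The care is needed for singular $A$, where columns carrying no pivot must be dealt with; here I would treat the standard echelon-form normalization as the known input that the statement's ``Since\ldots'' clause is citing, and concentrate the write-up on that point. Everything after (i) is the purely formal inversion above and requires no real work.
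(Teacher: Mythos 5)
Your formal skeleton is exactly what the paper intends: the Observation carries no separate proof, the ``Since\ldots'' clause being the whole argument, and your version of it --- record the operations as $F_m\cdots F_1\cdot A=C$, invert, and note that each $F_i^{-1}$ is elementary of the same type --- is the same bookkeeping spelled out correctly. The problem is the point you rightly flag as delicate and then defer. Reduction of an arbitrary square matrix to a \emph{diagonal} matrix by row operations alone is not a citable standard fact; it is false for singular matrices. Row operations preserve the row space, and the row space of a diagonal matrix is spanned by a subset of the standard basis vectors; so a matrix such as $\left(\begin{smallmatrix}1&1\\0&0\end{smallmatrix}\right)$, whose row space is the line through $(1,1)$, can never be carried to a diagonal matrix by row operations --- its reduced echelon form is itself. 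Equivalently, it admits no factorization $E_1\cdots E_k\cdot C$ with the $E_i$ elementary and $C$ diagonal, since $C=E_k^{-1}\cdots E_1^{-1}A$ would have to share its row space. So ingredient (i) of your plan cannot be supplied, and the ``known input'' you propose to cite (echelon-form normalization) delivers echelon form, not diagonal form.

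The invertible case goes through exactly as you say ($C=I$), and everything after ingredient (i) is fine. To repair the singular case one must either weaken ``diagonal'' to ``upper triangular'' (stop at echelon form; this still supports the determinant corollaries once \ref{DiagonalObservation} is extended to triangular matrices), or permit column operations as well, i.e.\ elementary factors on both sides of $C$. Your instinct that this was the main obstacle was correct; the write-up as proposed does not get past it, and neither does the paper's.
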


\begin{Corollary}
$\det(A)$ is the product of entries of $C$ and $(-1)^m$, where $m$
is the number of $E_i$ that swap rows.
\end{Corollary}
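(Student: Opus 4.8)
The plan is to combine the factorization $A = E_1 \cdot \ldots \cdot E_k \cdot C$ supplied by Observation \ref{FactorizationOfMatrices} with the multiplicativity of the determinant proved just above, thereby reducing the computation of $\det(A)$ to computing the determinant of each elementary factor $E_i$ and of the diagonal matrix $C$.

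First I would note that the reduction of $A$ to a diagonal matrix may be carried out using only row interchanges and row replacements: forward elimination clears the entries below the diagonal (inserting a swap whenever a pivot vanishes), and back-substitution clears the entries above the diagonal, and neither phase requires scaling a row. Hence we may assume that each $E_i$ appearing in the factorization is of interchange or replacement type, with all the "scaling" information carried by $C$; equivalently, any scaling operation can be postponed and merged into the final diagonal matrix.

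Next, applying $\det(A\cdot B)=\det(A)\cdot\det(B)$ inductively gives
$$\det(A)=\det(E_1)\cdot\ldots\cdot\det(E_k)\cdot\det(C),$$
and by Observation \ref{DiagonalObservation} the last factor $\det(C)$ is the product of the diagonal entries of $C$. It then remains to evaluate $\det(E_i)$ for the two remaining types. Since each $E_i$ is obtained from the identity matrix $I$ by a single elementary row operation and $\det(I)=\det(e_1,\ldots,e_n)=1$: for a replacement matrix one has $\det(E_i)=1$, because adding a multiple of one row to another leaves the determinant unchanged (multilinearity together with Observation \ref{Nullity}, exactly the argument already used in the proof of Theorem \ref{CharOfDeterminants}); for an interchange matrix one has $\det(E_i)=-1$ by antisymmetry, Proposition \ref{AntiSymmetry}.

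Finally, multiplying all the factors: the replacement matrices contribute $1$ each, each of the $m$ interchange matrices contributes $-1$, and $C$ contributes the product of its diagonal entries, so $\det(A)=(-1)^m\cdot(\text{product of entries of }C)$. The only step needing a moment's care is the first one — arranging the factorization so that no scaling elementary matrices occur — since otherwise the various scaling factors would also show up in the final formula; but as indicated, every such factor can be absorbed into $C$, so no generality is lost.
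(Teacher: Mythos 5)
Your proof is correct and follows the route the paper clearly intends (the paper states this corollary without any proof): factor $A=E_1\cdots E_k\cdot C$ as in Observation \ref{FactorizationOfMatrices}, apply $\det(A\cdot B)=\det(A)\cdot\det(B)$ repeatedly, evaluate $\det(C)$ by Observation \ref{DiagonalObservation}, and evaluate each $\det(E_i)$ as $1$ or $-1$ via multilinearity and antisymmetry. Your opening step --- arranging that no scaling matrices occur among the $E_i$, with all scaling absorbed into $C$ --- is not merely a convenience but a necessary repair: as literally stated, the corollary fails if a scaling factor $c\ne 1$ appears among the $E_i$ (e.g.\ $I=E\cdot C$ with $E$ scaling a row by $2$ and $C$ having a diagonal entry $1/2$ gives product of entries $1/2$, not $\det(I)=1$), so identifying and fixing this is a genuine improvement on the paper's presentation. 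The only caveat you inherit rather than introduce is that the reduction of a singular matrix to diagonal form by row operations (asserted in Observation \ref{FactorizationOfMatrices}) is itself problematic, but that is the paper's issue, not yours.
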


\begin{Corollary}
$\det(A^T)=\det(A)$.
\end{Corollary}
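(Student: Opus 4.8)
The plan is to reduce the claim $\det(A^T)=\det(A)$ to the factorization $A=E_1\cdots E_k\cdot C$ provided by Observation \ref{FactorizationOfMatrices}, together with the multiplicativity of $\det$ already established. Taking transposes, $A^T = C^T\cdot E_k^T\cdots E_1^T$, and since $C$ is diagonal we have $C^T=C$, so by the multiplicative property it suffices to check that $\det(E^T)=\det(E)$ for each of the three types of elementary matrix. This is the heart of the argument.

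For the individual elementary matrices I would argue as follows. An interchange matrix equals its own transpose, so there is nothing to prove there; in fact $\det(E)=-1=\det(E^T)$. A scaling matrix is diagonal, hence symmetric, so again $E^T=E$ and $\det(E^T)=\det(E)$, the common value being the scalar used. The only genuinely nontrivial case is the replacement matrix $E$, which differs from the identity in a single off-diagonal entry, say position $(i,j)$ with $i\ne j$; its transpose is the replacement matrix with that entry in position $(j,i)$. Both are triangular (one upper, one lower) with $1$'s on the diagonal, so by Observation \ref{DiagonalObservation} applied after reducing to diagonal form — or more directly, by expanding via the main equation / multilinearity and the vanishing of $\det$ on parallel rows (Observation \ref{Nullity}) — each has determinant $1$. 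Hence $\det(E^T)=1=\det(E)$ in this case as well.

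Putting the pieces together: $\det(A^T)=\det(C^T)\cdot\det(E_k^T)\cdots\det(E_1^T)=\det(C)\cdot\det(E_k)\cdots\det(E_1)=\det(E_1)\cdots\det(E_k)\cdot\det(C)=\det(A)$, where the middle equality uses the three computations above and the reordering is justified because $\mathbb{F}$ is commutative and each factor is a scalar. The main obstacle is precisely the replacement-matrix case; once one observes that a replacement matrix and its transpose are both triangular with unit diagonal, the determinant computation is immediate from the earlier results, and the rest is bookkeeping. One should also note that the factorization in Observation \ref{FactorizationOfMatrices} is not unique, but this causes no difficulty since the final identity $\det(A^T)=\det(A)$ does not depend on the chosen factorization.
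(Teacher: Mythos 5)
Your proposal is correct and follows essentially the same route as the paper: factor $A=E_1\cdots E_k\cdot C$ via Observation \ref{FactorizationOfMatrices}, apply $(M\cdot N)^T=N^T\cdot M^T$ and multiplicativity of $\det$, and reduce to $\det(E^T)=\det(E)$ for elementary matrices together with the diagonal case. The paper merely states these facts, whereas you verify them type by type (the replacement matrix being the only nontrivial one), so your write-up is a faithful elaboration of the intended argument.
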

\begin{proof}
Use $(M\cdot N)^T=N^T\cdot M^T$ and the fact $\det(E^T)=\det(E)$
for elementary matrices plus the analogous fact for diagonal matrices.
\end{proof}

\textbf{Cofactor expansion}, also known as expansion by minors or expansion by cofactors, is a method used to calculate the determinant of a square matrix. 

Cofactor expansion involves expanding the determinant along a row or a column of the matrix. Let's consider a square matrix $A$ of size $n \times n$. To calculate the determinant of A using cofactor expansion along the $i$-th row, the formula is as follows:

$det(A) = a_{i1} C_{i1} + a_{i2} C_{i2} + ... + a_{in} C_{in}$

Here, $a_{ij}$ represents the element of the matrix $A$ in the $i$-th row and $j$-th column, and $C_{ij}$ represents the cofactor associated with that element. The cofactor $C_{ij}$ is defined as the determinant of the submatrix obtained by removing the $i$-th row and $j$-th column from the original matrix $A$, multiplied by $(-1)^{i+j}$.

Cofactor expansion can also be performed along a column instead of a row. The formula remains the same, but the expansion is done along the $j$-th column, and the cofactors are defined accordingly.

\begin{Corollary}[Cofactor Expansion]\label{CofactorExpansion}
Cofactor expansion is a valid method of reducing determinants of $n\times n$-matrices to a combination of determinants of $(n-1)\times (n-1)$-matrices.
\end{Corollary}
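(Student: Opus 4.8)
The plan is to observe that Proposition \ref{ExtendingDeterminants} is, after the right identification, nothing but cofactor expansion along the first column, and then to bootstrap to an arbitrary row or column using column swaps and transposition.

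First I would identify $\mathbb{F}^{n}$ with $\mathbb{F}\times\mathbb{F}^{n-1}$ by peeling off the first coordinate, so that a row $(a_{i1},a_{i2},\ldots,a_{in})$ of an $n\times n$ matrix $A$ corresponds to the pair $\bigl(a_{i1},(a_{i2},\ldots,a_{in})\bigr)$. Feeding the unique $(n-1)$-determinant $\det$ on $\mathbb{F}^{n-1}$ (normalized by $\det(e_1,\ldots,e_{n-1})=1$) into Proposition \ref{ExtendingDeterminants} produces an $n$-determinant $D'$ on $\mathbb{F}^{n}$, and evaluating on the standard basis exactly as in the proof of \ref{ExtendingDeterminants} gives $D'(e_1,\ldots,e_n)=\det(e_1,\ldots,e_{n-1})=1$; hence $D'=\det$ on $\mathbb{F}^n$ by uniqueness of determinants (\ref{UniquenessOfDet}). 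Now I would unwind the formula defining $D'$: the factor $p_1(w_i)$ is $a_{i1}$, the argument $D\bigl(p_2(w_1),\ldots,\widehat{p_2(w_i)},\ldots\bigr)$ is the determinant of the submatrix $M_{i1}$ obtained from $A$ by deleting row $i$ and column $1$, and $(-1)^{i-1}=(-1)^{i+1}$ is precisely the sign attached to the cofactor $C_{i1}$. This yields $\det(A)=a_{11}C_{11}+\cdots+a_{n1}C_{n1}$, the expansion along the first column.

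Next I would pass to an arbitrary column $j$. Because $\det(A^T)=\det(A)$, the determinant is multilinear and antisymmetric in the columns of $A$ as well; moving column $j$ to the front by $j-1$ successive swaps of adjacent columns multiplies $\det(A)$ by $(-1)^{j-1}$ and preserves the relative order of the remaining columns, so the minor of the permuted matrix in row $i$, column $1$ coincides with $M_{ij}$. Applying the column-$1$ expansion to the permuted matrix and multiplying the signs, $(-1)^{j-1}(-1)^{i-1}=(-1)^{i+j}$, gives expansion along column $j$. Expansion along row $i$ then follows by applying the column-$i$ expansion to $A^T$, using $\det(N^T)=\det(N)$ and the identity $\bigl(M^{A}_{ij}\bigr)^{T}=M^{A^{T}}_{ji}$ for minors.

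The only step carrying real content is the first one: recognizing that \ref{ExtendingDeterminants} already \emph{is} the cofactor formula and that \ref{UniquenessOfDet} fixes the normalizing constant to be $1$. After that, the work is bookkeeping with permutation signs and transposes; the one thing I would be careful about is checking that deleting a column shuffled to the front recovers exactly the intended minor, with its surviving columns in their original order.
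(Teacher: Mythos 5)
Your proposal is correct and follows the same route as the paper: recognize Proposition \ref{ExtendingDeterminants} as expansion along the first column (pinned down by the uniqueness result \ref{UniquenessOfDet}), then reach arbitrary columns by column swaps and rows via $\det(A^T)=\det(A)$. You have simply filled in the sign and minor bookkeeping that the paper leaves implicit.
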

\begin{proof}
Notice that \ref{ExtendingDeterminants} is precisely cofactor expansion along the first column of $A$. Apply column operations to deduce all cases of cofactor expansion.

Alternatively, create formulae analogous to \ref{ExtendingDeterminants} for each case of cofactor expansion to see that it does represent the standard determinant.
\end{proof}


\begin{thebibliography}{99}
\bibitem{GS1} Gilbert Strang, \emph{Linear Algebra for Everyone}, Wellesley-Cambridge Press,
ISBN 978-1-7331466-3-0

\bibitem{GS2} Gilbert Strang, \emph{Introduction to Linear Algebra, Fifth Edition (2016)}, Wellesley-Cambridge Press,
ISBN : 978-09802327-7-6




\end{thebibliography}
\end{document}